\newtheorem{thrm}{Theorem}[section]
\newtheorem{cor}[thrm]{Corollary}
\newtheorem{lemma}[thrm]{Lemma}
\newtheorem{prop}[thrm]{Proposition}
\theoremstyle{definition}
\newtheorem{rem}[thrm]{Remark}
\newtheorem{claim}{Claim}
\newtheorem*{xrem}{Remark}
\newcommand{\NN}{\mathbb{N}}
\newcommand{\la}{\langle}
\newcommand{\ra}{\rangle}
\begin{document}

\title[On $\kappa$-pseudocompactess]{On $\kappa$-pseudocompactess and uniform homeomorphisms of function spaces}
\author[M. Krupski]{Miko\l aj Krupski}
\thanks{The author was partially supported by the NCN (National Science Centre, Poland) research Grant no. 2020/37/B/ST1/02613}
\date{}
\address{
Institute of Mathematics\\ University of Warsaw\\ ul. Banacha 2\\
02--097 Warszawa, Poland }
\email{mkrupski@mimuw.edu.pl}

\begin{abstract}
A Tychonoff space $X$ is called \textit{$\kappa$-pseudocompact} if for every continuous mapping $f$ of $X$ into $\mathbb{R}^\kappa$
the image $f(X)$ is compact. This notion generalizes pseudocompactness and gives a stratification of spaces lying between pseudocompact and compact spaces.
It is well known that pseudocompactness of $X$ is determined by the uniform structure
of the function space $C_p(X)$ of continuous real-valued functions on $X$ endowed with the
pointwise topology. In respect of that
A.V. Arhangel'skii asked in [Topology Appl., 89 (1998)] if analogous assertion is true for
$\kappa$-pseudocompactness. We provide an affirmative answer to this question.
\end{abstract}

\subjclass[2010]{Primary: 54C35, 54D30, 54E15}

\keywords{Function space, pointwise convergence topology, $C_p(X)$ space, $u$-equivalence, uniform homeomorphism, $\kappa$-pseudocompactness}

\maketitle

\section{Introduction}
In this note, by a space we mean a Tychonoff topological space.
For a space $X$, by $C_p(X)$ we denote the space of all continuous real-valued functions on $X$ endowed with the pointwise topology.
The symbol $C_p^\ast(X)$ stands for the subspace of $C_p(X)$ consisting of all \textit{bounded} continuous functions.
Recall that $X$ is \textit{pseudocompact} if $C_p(X)=C_p^\ast(X)$, i.e. every real-valued continuous function on $X$ is bounded.
%One of the main line of research in the theory of $C_p(X)$-spaces is the search for properties of $X$ which are determined by the structure of function space $C_p(X)$; the monograph \cite{Tk} provides a fairly comprehensive treatment of this topic.
In 1962 J.F. Kennison \cite{Ke} introduced the following generalization of psudocompactness.
Let $\kappa$ be an infinite cardinal. A space $X$ is called \textit{$\kappa$-pseudocompact} if for every continuous mapping $f$ of $X$ into $\mathbb{R}^\kappa$
the image $f(X)$ is compact.
Clearly, $\kappa$-pseudocompactness implies $\lambda$-pseudo\-compact\-ness for every infinite cardinal $\lambda\leq\kappa$.
It is also clear that $\omega$-pseudocompactness is precisely pseudocompactness. In particular any $\kappa$-pseudocompact space is pseudocompact.
It was established by 
Uspenski\u{\i} in \cite{U} that pseudocompactness of $X$ is determined by the uniform structure of the function space $C_p(X)$ (see \cite{GKM}
for a different proof of this result).
In respect of that A.V. Arhangel'skii asked in 1998 if analogous result holds for $\kappa$-pseudocompactness
(see \cite[Question 13]{Ar} or \cite[Problem 4.4.2]{Tk}).

The aim of the present note is to
provide an affirmative answer to this question by proving the
following extension of Uspenski\u{\i}'s theorem:

\begin{thrm}\label{theorem_main}
For any infinite cardinal $\kappa$, if $C_p(X)$ and $C_p(Y)$ are uniformly homeomorphic,
then $X$ is $\kappa$-pseudocompact if and only if $Y$ is $\kappa$-pseudocompact.
\end{thrm}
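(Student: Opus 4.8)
The plan is to deduce the full biconditional from a single implication by symmetry: since the hypothesis that $C_p(X)$ and $C_p(Y)$ are uniformly homeomorphic is symmetric in $X$ and $Y$ (the inverse of a uniform homeomorphism is again one), it suffices to prove that if $X$ is $\kappa$-pseudocompact then so is $Y$, i.e.\ to show that a failure of $\kappa$-pseudocompactness transfers from $Y$ back to $X$. To make ``failure'' tractable I would first record the (standard, short) equivalence: a space $Z$ is $\kappa$-pseudocompact if and only if $Z$ is $G_\kappa$-dense in $\beta Z$, meaning that every nonempty intersection of at most $\kappa$ cozero subsets of $\beta Z$ meets $Z$. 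For one direction, a point $q\in\RR^\kappa$ is an intersection of $\kappa$ cozero sets, so for a map $f\colon Z\to\RR^\kappa$ (which lands in a compact box, as $Z$ is pseudocompact) the fibre $(\beta f)^{-1}(q)$ is a nonempty $G_\kappa$-set of $\beta Z$; $G_\kappa$-density forces it to meet $Z$, giving $f(Z)=\beta f(\beta Z)$ and hence compactness. The converse reverses this computation. Unravelled on $Z$ itself, non-$\kappa$-pseudocompactness produces a family $\{V_\alpha:\alpha<\kappa\}$ of cozero sets of $Z$ with the finite intersection property but empty total intersection, realised as the traces of neighbourhoods of a single point $p\in\beta Z\setminus Z$; this ``filter clustering at a point at infinity'' is the combinatorial object I would transport.

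The engine of the transfer is the support calculus of $\phi\colon C_p(X)\to C_p(Y)$ and its inverse $\psi$. The lemma I would isolate is that every uniformly continuous $h\colon C_p(X)\to\RR$ (for the additive uniformity) depends on only countably many coordinates: choosing for each $n$ a finite $F_n\subseteq X$ and $\delta_n>0$ witnessing uniform continuity at scale $1/n$, the countable set $S=\bigcup_n F_n$ satisfies that $f|_{S}=f'|_{S}$ implies $h(f)=h(f')$. Applying this to the functionals $f\mapsto\phi(f)(y)$ and $h\mapsto\psi(h)(x)$ yields countable support maps $\supp_\phi\colon Y\to[X]^{\le\omega}$ and $\supp_\psi\colon X\to[Y]^{\le\omega}$, so that $\phi(f)(y)$ is determined by $f|_{\supp_\phi(y)}$ and $\psi(h)(x)$ by $h|_{\supp_\psi(x)}$. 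The arithmetic point that makes Uspenski\u{\i}'s $\omega$-case generalise is precisely that these supports are \emph{countable} while the witness is indexed by $\kappa\ge\omega$: any union of $\kappa$-many supports still has size at most $\kappa\cdot\omega=\kappa$.

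With this in hand I would, by a routine closing-off argument, build sets $W\subseteq X$ and $D\subseteq Y$ of size at most $\kappa$ that are closed under the two support maps (so $\supp_\phi(y)\subseteq W$ for $y\in D$ and $\supp_\psi(x)\subseteq D$ for $x\in W$) and that capture the given witness of non-$\kappa$-pseudocompactness of $Y$. The aim is then to use $\psi$ together with $\supp_\psi$ to convert the $Y$-side family $\{V_\alpha\}$ and its clustering point $p$ into a family of $\kappa$ cozero subsets of $X$, again with the finite intersection property and empty intersection, clustering at some $p'\in\beta X\setminus X$, thereby contradicting the $G_\kappa$-density of $X$ in $\beta X$ established in the first step. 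Throughout, the countability of individual supports keeps every cardinal that appears bounded by $\kappa$, so the transported family remains of size $\le\kappa$ and is a legitimate witness.

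The main obstacle, and where the genuine work lies, is that $\phi$ and $\psi$ are merely uniformly continuous and \emph{not} linear, so the support maps record only an approximate, scale-$1/n$ dependence and the passage between the cozero structures on $Y$ and on $X$ is not functorial. The delicate points are therefore: (a) showing that the transported family of cozero sets on $X$ still has the finite intersection property with empty total intersection, i.e.\ that the clustering of the $Y$-filter at $p$ is genuinely inherited rather than dissipated by the approximate supports; and (b) pinning down the limit point $p'\in\beta X\setminus X$ (equivalently, exhibiting a point in $\overline{a(X)}\setminus a(X)$ for the reassembled map $a\colon X\to\RR^\kappa$). I expect (a) to be the heart of the argument, to be handled by a back-and-forth pairing of $\supp_\phi$ and $\supp_\psi$ that exploits their approximate adjointness, with the size-$\le\kappa$ sets $W$ and $D$ providing the arena in which the two filters can be matched coordinate by coordinate.
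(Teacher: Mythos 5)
Your opening reduction (symmetry plus the characterization of $\kappa$-pseudocompactness as $G_\kappa$-density of $Z$ in $\beta Z$, i.e.\ Retta's theorem) matches the paper's strategy, and the idea of a support map associated with $\varphi$ and $\varphi^{-1}$ is the right one. But the proposal has genuine gaps, and they sit exactly where you yourself flag ``where the genuine work lies'': the transfer mechanism is never constructed, and the tools you propose to build it from are too weak. Your support lemma produces a \emph{countable} set $S\subseteq X$ on which $f\mapsto\varphi(f)(y)$ is \emph{exactly} determined, and it is defined only for $y\in Y$. What the argument needs is different on both counts: a \emph{finite} set $F$ with the \emph{approximate} property that perturbing $f$ by less than $1$ on $F$ moves $\varphi(f)(y)$ by at most a fixed bound $n$ (this is what uniform continuity actually yields, via the telescoping trick $f_k=f+\tfrac{k}{n}(g-f)$), and this notion must be extended to every $y\in\beta Y$, because the witness of non-$\kappa$-pseudocompactness is a point at infinity (equivalently, a nonempty $G_\kappa$-set of $\beta X$ missing $X$, whose preimage under the support map must be shown to be a nonempty $G_\kappa$-set of $\beta Y$). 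Finiteness is not a luxury: in the paper's Claim \ref{Claim 2} the identity $K^{-1}\bigl(\bigcap_\alpha U_\alpha\bigr)=\bigcap_\alpha K^{-1}(U_\alpha)$ is proved by a pigeonhole argument over the finitely many points of $K(y)$ together with closure of $\{U_\alpha\}$ under \emph{finite} intersections; with countable supports the analogous step would force you to close the $U_\alpha$ under countable intersections, destroying openness. Your closing-off sets $W$ and $D$ of size $\le\kappa$ do not repair this, since they live inside $X$ and $Y$ and cannot see the point of $\beta Y\setminus Y$ at which your filter clusters.

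Moreover, the step you defer to a ``back-and-forth pairing exploiting approximate adjointness'' is precisely the paper's Lemma \ref{lemma-back} (for every $x\in\beta X$ there is $y\in K(x)$ with $x\in K(y)$), and its proof requires the quantitative bookkeeping $a(y)=a(y,K(y))<\infty$ that your qualitative countable-support lemma discards. Likewise, your point (b) --- locating $p'\in\beta X\setminus X$ --- is handled in the paper not by reassembling a map into $\mathbb{R}^\kappa$ but by showing that $K^{-1}(U)$ is $G_\delta$ in $\beta Y$ for every open $U\subseteq\beta X$ (Proposition \ref{Corollary-Arbit}), which in turn rests on the stratification $\beta Y=\bigcup_{n,m}Y_{n,m}$ by the size of the support and of the bound, a compactness argument with no analogue in your setup. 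So while the skeleton is right, the proposal as written does not contain the ideas needed to complete it, and the specific substitutes you offer (exact countable supports defined on $Y$ only, plus a closing-off argument) would not suffice.
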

 
Let us recall that that a map $\varphi:C_p(X)\to C_p(Y)$
is \textit{uniformly continuous} if for each open neighborhood $U$ of the zero function in $C_p(Y)$, there is and open neighborhood $V$ of the zero function
in $C_p(X)$ such that $(f-g)\in V$ implies $(\varphi(f)-\varphi(g))\in U$.
Spaces $C_p(X)$ and $C_p(Y)$ are \textit{uniformly homeomorphic} if there is a homeomorphism $\varphi$ between them such that both $\varphi$ and $\varphi^{-1}$
are uniformly continuous.

The proof of Theorem \ref{theorem_main} is inspired by author's recent work \cite{Kr} concerned with linear homeomorphisms of function spaces.
%One of the main tools used for studying linear homeomorphisms of $C_p(X)$-spaces is the so-called \textit{support map}. It is well known that a linear homeomorphism
%$\varphi:C_p(X)\to C_p(Y)$
%induces a certain lower semi-continuous set-valued map
%(the support map) that assigns to each $y\in Y$ a finite nonempty subset of $X$ (see, e.g. \cite[\S 6.8]{vM}). 
The basic idea in \cite{Kr} relies on the fact that
certain topological properties of a space $X$ can be conveniently characterized by the way $X$ is positioned in its \v{C}ech-Stone compactification $\beta X$;
$\kappa$-pseudocompactess is one of such properties. Indeed, Hewitt \cite{H}  gave the following description of pseudocompactness
(cf. \cite[Theorem 1.3.3]{pseudocompact}).

\begin{thrm}(Hewitt)\label{theorem_Hewitt}
A space $X$ is pseudocompact if and only if every nonempty $G_\delta$-subset of $\beta X$ meets $X$.
\end{thrm}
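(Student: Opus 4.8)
The plan is to prove both implications by contraposition, using the defining universal property of the \v{C}ech--Stone compactification together with the normality of $\beta X$. Throughout I will use the standard facts that $\beta X$ is compact Hausdorff and that $X$ is dense in $\beta X$.

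For the direction ``every nonempty $G_\delta$ meets $X$ $\Rightarrow$ $X$ is pseudocompact'', I would assume $X$ is not pseudocompact and fix a continuous unbounded $f\colon X\to\mathbb{R}$; replacing $f$ by $-f$ if necessary, I may assume $f$ is unbounded above. Viewing $f$ as a map into the compact space $[-\infty,+\infty]$, the universal property of $\beta X$ extends it to a continuous $\tilde f\colon\beta X\to[-\infty,+\infty]$. Choosing $x_n\in X$ with $f(x_n)\to+\infty$ and taking a cluster point $p$ of $(x_n)$ in the compact space $\beta X$, continuity forces $\tilde f(p)=+\infty$, so $\tilde f^{-1}(+\infty)\neq\emptyset$. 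Since $\{+\infty\}=\bigcap_n(n,+\infty]$ is a $G_\delta$ in $[-\infty,+\infty]$, the set $\tilde f^{-1}(+\infty)=\bigcap_n\tilde f^{-1}((n,+\infty])$ is a nonempty $G_\delta$ in $\beta X$, and it is disjoint from $X$ because $f=\tilde f|_X$ is real-valued. This contradicts the hypothesis.

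For the converse I would again argue by contraposition: suppose some nonempty $G_\delta$ set $G=\bigcap_n U_n$ of $\beta X$ misses $X$, and construct an unbounded continuous function on $X$. Fixing $p\in G$ and using that $\beta X$ is regular, I would build a decreasing sequence of open sets $W_n\ni p$ with $\overline{W_{n+1}}\subseteq W_n\subseteq U_n$; then $F:=\bigcap_n W_n=\bigcap_n\overline{W_n}$ is contained in $G$, so $F\cap X=\emptyset$. By Urysohn's lemma choose continuous $h_n\colon\beta X\to[0,1]$ with $h_n\equiv1$ on $\overline{W_{n+1}}$ and $h_n\equiv0$ off $W_n$, and set $h=\sum_n h_n$. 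The key point is that this sum is locally finite on $\beta X\setminus F$: any point outside $F$ lies outside some $\overline{W_{n_0}}$, an open neighbourhood on which $h_m\equiv0$ for all $m\ge n_0$. Hence $h$ is continuous on $\beta X\setminus F\supseteq X$, so $h|_X$ is continuous. On the other hand $h\ge n$ on $\overline{W_{n+1}}$, and since $X$ is dense the nonempty open set $W_{n+1}$ meets $X$; therefore $h|_X$ attains values $\ge n$ for every $n$, so it is unbounded and $X$ fails to be pseudocompact.

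I expect the main obstacle to be this second direction: manufacturing a genuinely real-valued (hence finite) continuous function on $X$ that is nonetheless unbounded. The delicate points are (i) arranging the telescoping sequence $\overline{W_{n+1}}\subseteq W_n$ so that the limit set $F$ stays inside $G$ and thus avoids $X$, and (ii) verifying that $\sum_n h_n$ is continuous precisely on the complement of $F$, which is where the local finiteness argument and the separation properties of $\beta X$ do the real work. Density of $X$ in $\beta X$ is what converts ``$h$ blows up as one approaches $F$'' into genuine unboundedness on $X$ itself.
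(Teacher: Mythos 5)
Your argument is correct, and both directions are complete. Note that the paper does not actually prove Theorem~\ref{theorem_Hewitt}; it is quoted from Hewitt's work and the cited literature, so there is no in-text proof to compare against. Your proof is the standard one: for the forward direction you extend an unbounded $f$ to $\tilde f\colon\beta X\to[-\infty,+\infty]$ and observe that $\tilde f^{-1}(+\infty)$ is a nonempty $G_\delta$ missing $X$ (the cluster-point argument correctly forces $\tilde f(p)=+\infty$, since a convergent sequence in a Hausdorff space has a unique cluster point); for the converse you shrink the $G_\delta$ to a telescoping chain $\overline{W_{n+1}}\subseteq W_n\subseteq U_n$ and sum Urysohn functions, with the local finiteness of $\sum_n h_n$ off $F=\bigcap_n\overline{W_n}$ giving continuity on $X$ and density of $X$ in each $W_{n+1}$ giving unboundedness. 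Both delicate points you flag are handled correctly; in particular $F\subseteq G$ guarantees $F\cap X=\emptyset$, and normality of the compact Hausdorff space $\beta X$ justifies both the shrinking and the Urysohn steps. The only cosmetic remark is that the forward direction can be shortened by noting that $\tilde f(\beta X)$ is compact and contains the unbounded set $f(X)$, hence contains $+\infty$, avoiding sequences altogether.
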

It was noted by Retta in \cite{Re} that the above result easily extends to $\kappa$-pseudo\-compact\-ness. We need the following notation.
Let $\kappa$ be an infinite cardinal. A subset $A$ of a space $Z$ is a \textit{$G_\kappa$-set} if it is an intersection
of at most $\kappa$-many open subsets of $Z$. The $G_{\omega}$-sets are called $G_\delta$-sets and the complement of a $G_\delta$-set is called $F_\sigma$-set.
We have (see \cite[Theorem 1]{Re}):

\begin{thrm}(Retta)\label{Retta}
Let $\kappa$ be an infinite cardinal. A space $X$ is $\kappa$-pseudocompact if and only if every nonempty $G_\kappa$-subset of $\beta X$ meets $X$.
\end{thrm}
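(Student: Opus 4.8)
The plan is to prove Retta's theorem by reducing $\kappa$-pseudocompactness, as defined via maps into $\RR^\kappa$, to the internal $G_\kappa$-condition in $\beta X$. The governing principle is that a continuous map $f\colon X\to\RR^\kappa$ extends, factor-wise, to $\beta X$: since $\RR^\kappa$ is a product of copies of $\RR$ and each coordinate $f_\alpha\colon X\to\RR$ is bounded on any pseudocompact space, each $f_\alpha$ extends continuously to $\beta X$, yielding an extension $\beta f\colon\beta X\to\RR^\kappa$ (we may first reduce to the bounded case by composing with a homeomorphism $\RR\cong(0,1)$, since compactness of the image is a topological invariant). I would set up this correspondence between continuous maps $X\to\RR^\kappa$ and their extensions to $\beta X$ as the backbone of both implications.

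For the forward direction, assume $X$ is $\kappa$-pseudocompact and let $G=\bigcap_{\alpha<\kappa}U_\alpha$ be a nonempty $G_\kappa$-subset of $\beta X$. I would pick a point $p\in G$ and, for each $\alpha$, use complete regularity of $\beta X$ to choose a continuous $g_\alpha\colon\beta X\to[0,1]$ with $g_\alpha(p)=0$ and $g_\alpha\equiv1$ off $U_\alpha$. Assembling these into $g=(g_\alpha)_{\alpha<\kappa}\colon\beta X\to[0,1]^\kappa$ and restricting to $f=g\restriction X$, the hypothesis forces $f(X)$ to be compact, hence closed in $[0,1]^\kappa$. The key computation is that $g(p)$ is the zero point $\mathbf 0$, and $\mathbf 0$ lies in the closure of $f(X)$ because $X$ is dense in $\beta X$; compactness of $f(X)$ then gives $\mathbf 0\in f(X)$, so some $x\in X$ has $f(x)=\mathbf 0$, i.e. $g_\alpha(x)=0$ for every $\alpha$, which places $x$ in every $U_\alpha$ and hence in $G\cap X$.

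For the converse, assume every nonempty $G_\kappa$-subset of $\beta X$ meets $X$, and let $f\colon X\to\RR^\kappa$ be continuous; after the reduction above take $f\colon X\to[0,1]^\kappa$ with extension $\beta f\colon\beta X\to[0,1]^\kappa$. Since $\beta X$ is compact, $\beta f(\beta X)$ is compact, and it suffices to show $f(X)=\beta f(\beta X)$, i.e. that $\beta f$ gains no new points off $X$. Given any $q\in\beta X$, I would consider the fiber $F=\beta f^{-1}(\beta f(q))=\bigcap_{\alpha<\kappa}(\beta f)_\alpha^{-1}(\{(\beta f(q))_\alpha\})$; each singleton in $[0,1]$ is a $G_\delta$, so each coordinate preimage is a $G_\kappa$-set, and the intersection of $\kappa$-many $G_\kappa$-sets is again a $G_\kappa$-set (here $\kappa\cdot\kappa=\kappa$ is the only cardinal arithmetic needed). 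Thus $F$ is a nonempty $G_\kappa$-subset of $\beta X$, so by hypothesis $F\cap X\neq\emptyset$, which exhibits a point of $X$ mapping to $\beta f(q)$; hence $\beta f(q)\in f(X)$ and $f(X)$ is compact.

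The main obstacle I anticipate is bookkeeping rather than conceptual: verifying cleanly that an arbitrary continuous $f\colon X\to\RR^\kappa$ reduces to the bounded case and genuinely extends to $\beta X$ coordinatewise, and checking that the relevant $G_\kappa$-structure is preserved under the $\kappa$-fold intersection of fiber preimages. Once the extension machinery and the closure-of-$X$-in-$\beta X$ density argument are in place, both directions follow from Hewitt's theorem pattern with $\kappa$ replacing $\omega$; indeed the whole argument is a careful cardinal-indexed mirror of the proof of Theorem \ref{theorem_Hewitt}.
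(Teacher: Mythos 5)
Your proof is correct. The paper does not prove this statement at all --- it is quoted from Retta's paper \cite{Re} --- so there is no in-text argument to compare against; your proof supplies exactly the standard argument that the citation points to, namely the cardinal-indexed version of Hewitt's $G_\delta$ criterion. Both directions check out: the forward direction correctly uses normality of $\beta X$ to build the Urysohn functions $g_\alpha$, density of $X$ in $\beta X$ to place $\mathbf 0$ in $\overline{f(X)}$, and closedness of the compact image to land a point of $X$ in $G$; the converse correctly reduces to bounded coordinates via $\RR\cong(0,1)$ so that the coordinatewise extension $\beta f$ exists without any pseudocompactness assumption, and the fiber $\beta f^{-1}(\beta f(q))$ is indeed a nonempty $G_\kappa$-set since $\kappa\cdot\omega=\kappa$. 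The only cosmetic remark is that your aside about coordinates being ``bounded on any pseudocompact space'' is unnecessary (and circular-looking in the converse direction, where pseudocompactness is not yet known); the $(0,1)$ reduction you actually use already handles this.
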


The uniform structure of spaces of continuous functions was studied by many authors; the interested reader should consult the book \cite{Tk}.
For our purposes, the most important are some ideas developed by Gul'ko in \cite{G}.
%and subsequent research of Arbit \cite{A}.

%The main tool that we use to prove Theorem \ref{theorem_main} is the support map introduced by Gul'ko in \cite{G}. If $C_p(X)$ and $C_p(Y)$ are uniformly homeomorphic then this map assigns to each point of $Y$ a finite subset of $X$.
%To apply Theorem \ref{Retta},
%we show that the Gul'ko support map can be naturally extended over the \v{C}ech-Stone compactification $\beta Y$ of $Y$, assigning to each point of $\beta Y$ a compact subset of $\beta X$. This extension works particularly nice if both $X$ and $Y$ are pseudocompact spaces. This reveals that, to some extent, a method presented in \cite{Kr} in the linear setting, works also for uniform homeomorphisms.  

\section{Results}

%It was shown by Gul'ko in \cite{G} that a uniform homeomorphism between function spaces $C_p(X)$ and $C_p(Y)$ induces a certain map assigning to each point $y\in Y$ a finite subset of $X$. In this section we show that for uniform homeomorphisms between spaces $C^\ast_p(X)$ and $C^\ast_p(X)$ of bounded continuous functions, the map introduced by Gul'ko can be naturally extended over the \v{C}ech-Stone compactification $\beta Y$ of $Y$.

For a space $Z$ and a function $f\in C^\ast_p(Z)$
the function $\widetilde{f}:\beta Z\to \mathbb{R}$ is the unique continuous extension of $f$ over the \v{C}ech-Stone compactification $\beta Z$ of $Z$.
Let $\varphi:C^\ast_p(X)\to C^\ast_p(Y)$ be a uniformly continuous surjection.
For $y\in \beta Y$ and a subset $K$ of $\beta X$ we define

\begin{align*}
a(y,K)=\sup\{|\widetilde{\varphi(f)}(y)-\widetilde{\varphi(g)}(y)|&:\;f,g\in C_p^\ast(X) \mbox{ such that }\\
&|\widetilde{f}(x)-\widetilde{g}(x)|<1 \mbox{ for every }x\in K\}
\end{align*}
Note that $a(y,\emptyset)=\infty$ since $\varphi$ is onto.

For $y\in \beta Y$ define the family
$$\mathscr{A}(y)=\{K\subseteq \beta X: K \mbox{ is compact and } a(y,K)<\infty\}.$$
Similarly, for $y\in \beta Y$ and $n\in \mathbb{N}$ let
$$\mathscr{A}_n(y)=\{K\subseteq \beta X: K \mbox{ is compact and } a(y,K)\leq n\}.$$

It may happen that for some $n$ the family $\mathscr{A}_n(y)$ is empty. However, we have the following:

\begin{prop}\label{A_n is nonempty}
For every $y\in Y$, there exists $n$ for which $\mathscr{A}_n(y)$ contains a nonempty finite subset of $X$.
In particular, for this $n$ the family $\mathscr{A}_n(y)$ is nonempty.
\end{prop}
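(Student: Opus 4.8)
The plan is to extract the required finite set directly from the uniform continuity of $\varphi$ at the point $y$, and to reconcile the modulus of uniform continuity with the fixed threshold $1$ built into the definition of $a(y,K)$ by means of a standard interpolation (``chaining'') argument.

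First I would fix $y\in Y$ and test uniform continuity against the basic neighborhood $U=\{h\in C_p^\ast(Y):|h(y)|<1\}$ of the zero function. Uniform continuity of $\varphi$ then produces a basic neighborhood $V=\{h\in C_p^\ast(X):|h(x_j)|<\delta,\ j=1,\dots,m\}$ of zero, with $x_1,\dots,x_m\in X$ and $\delta>0$, such that $(f-g)\in V$ implies $(\varphi(f)-\varphi(g))\in U$, i.e.\ $|\varphi(f)(y)-\varphi(g)(y)|<1$. Here I would observe that we may assume $m\ge 1$: if $V$ imposed no constraint, we would get $|\varphi(f)(y)-\varphi(g)(y)|<1$ for all $f,g\in C_p^\ast(X)$, forcing $a(y,\emptyset)\le 1$ and contradicting $a(y,\emptyset)=\infty$. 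Setting $F=\{x_1,\dots,x_m\}$ gives a nonempty finite, hence compact, subset of $X$; and since $y\in Y$ and each $x_j\in X$, the \v{C}ech--Stone extensions satisfy $\widetilde{\varphi(f)}(y)=\varphi(f)(y)$ and $\widetilde f(x_j)=f(x_j)$.

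The core step is to bound $a(y,F)$. I would fix $N\in\NN$ with $N\ge 1/\delta$ and take arbitrary $f,g\in C_p^\ast(X)$ with $|\widetilde f(x)-\widetilde g(x)|<1$ for all $x\in F$, that is $|f(x_j)-g(x_j)|<1$ for each $j$. Interpolating by $f_i=f+\tfrac{i}{N}(g-f)$ for $i=0,\dots,N$ gives $f_0=f$, $f_N=g$, with each $f_i\in C_p^\ast(X)$, and the consecutive differences satisfy $|(f_{i+1}-f_i)(x_j)|=\tfrac1N|f(x_j)-g(x_j)|<\tfrac1N\le\delta$, so that $(f_{i+1}-f_i)\in V$ and hence $|\varphi(f_{i+1})(y)-\varphi(f_i)(y)|<1$. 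Summing over $i$ via the triangle inequality yields $|\varphi(f)(y)-\varphi(g)(y)|<N$, and since $f,g$ were arbitrary subject to the closeness condition on $F$, this gives $a(y,F)\le N$, i.e.\ $F\in\mathscr{A}_N(y)$, which proves the proposition with $n=N$.

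The only genuinely delicate point is the interpolation step: it is exactly what lets one pass from the uniform-continuity modulus $\delta$, which need not equal $1$, to the threshold $1$ appearing in the definition of $a$. Without chaining through the $f_i$, uniform continuity would only control pairs that are $\delta$-close on $F$, whereas $a(y,F)$ is a supremum over pairs that are merely $1$-close there. The remaining ingredients — the nonemptiness of $F$ deduced from $a(y,\emptyset)=\infty$, and the identification of boundary values through the \v{C}ech--Stone extensions at points of $X$ and $Y$ — are routine.
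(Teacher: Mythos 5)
Your proof is correct and follows essentially the same route as the paper: extract a finite set $F\subseteq X$ and a modulus $\delta$ (the paper takes $\delta=1/n$ directly) from the uniform continuity of $\varphi$ at the neighborhood $\{h:|h(y)|<1\}$, then use the chaining argument $f_i=f+\tfrac{i}{N}(g-f)$ to convert the $1$-closeness hypothesis on $F$ into $\delta$-closeness of consecutive terms, yielding $a(y,F)\leq N$. Your explicit justification that $F$ may be taken nonempty (via $a(y,\emptyset)=\infty$) is a small point the paper leaves implicit, but otherwise the arguments coincide.
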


\begin{proof}
By uniform continuity of $\varphi$, there is $n\in \mathbb{N}$ and a finite subset $F$ of $X$ such that
\begin{equation}\label{equation 1}
    \begin{aligned}
    &\mbox{if } |f(x)-g(x)|<1/n \mbox{ for every } x\in F,\\ &\mbox{then } |\varphi(f)(y)-\varphi(g)(y)|<1.
    \end{aligned}
\end{equation}
We claim that $F\in \mathscr{A}_n(y)$. To see this, take arbitrary functions $f,g\in C_p^\ast(X)$ such that $|f(x)-g(x)|<1$, for every $x\in F$.
Put $f_k=f+\tfrac{k}{n}(g-f)$, for $k=0,1,\ldots , n$. Then $f_0=f$, $f_n=g$ and $|f_k(x)-f_{k+1}(x)|<1/n$ for $x\in F$.
Hence, by \eqref{equation 1} we get
\begin{align*}
&|\varphi(f)(y)-\varphi(g)(y)|\leq\\
&|\varphi(f_0)(y)-\varphi(f_1)(y)|+\ldots + |\varphi(f_{n-1})(y)-\varphi(f_n)(y)|<n,
\end{align*}
as required.
\end{proof}

Clearly, for every $y\in \beta Y$ we have $\mathscr{A}(y)=\bigcup_{n\in \mathbb{N}}\mathscr{A}_n(y)$. In particular,
for $y\in Y$ the family $\mathscr{A}(y)$ is always nonempty.

For $n\in \NN$ we set
$$Y_n=\{y\in \beta Y:\mathscr{A}_n(y) \mbox{ is nonempty}\}.$$
Note that $y\in Y_n$ if and only if $\beta X\in \mathscr{A}_n(y)$. Using this observation it is easy to show the following:
\begin{lemma}\label{Y_n is compact}
For every $n\in \NN$ the set $Y_n$ is closed in $\beta Y$; hence compact.
\end{lemma}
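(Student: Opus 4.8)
The plan is to use the stated observation that $y\in Y_n$ if and only if $\beta X\in\mathscr{A}_n(y)$, i.e.\ that $y\in Y_n$ exactly when $a(y,\beta X)\le n$. This equivalence rests on a monotonicity property of $a(y,\cdot)$: if $K\subseteq K'$ are compact subsets of $\beta X$, then every pair $(f,g)$ admissible for $K'$ (that is, with $|\widetilde f(x)-\widetilde g(x)|<1$ for all $x\in K'$) is also admissible for $K$, so the supremum defining $a(y,K')$ is taken over a smaller family and hence $a(y,K')\le a(y,K)$. Since $\beta X$ is the largest compact subset of $\beta X$, the value $a(y,\beta X)$ is the least among all $a(y,K)$ with $K$ compact; thus $\mathscr{A}_n(y)\neq\emptyset$ forces $a(y,\beta X)\le n$, while the converse is trivial. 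Consequently $Y_n=\{y\in\beta Y: a(y,\beta X)\le n\}$, and it suffices to show that the function $y\mapsto a(y,\beta X)$ is lower semicontinuous on $\beta Y$.

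To this end I would write out the definition with $K=\beta X$. Since $X$ is dense in $\beta X$ and $\widetilde f-\widetilde g=\widetilde{f-g}$ is continuous, the requirement ``$|\widetilde f(x)-\widetilde g(x)|<1$ for every $x\in\beta X$'' amounts to $\|f-g\|_\infty<1$, so that
\[
a(y,\beta X)=\sup\bigl\{\,|\widetilde{\varphi(f)}(y)-\widetilde{\varphi(g)}(y)| : f,g\in C_p^\ast(X),\ \|f-g\|_\infty<1\,\bigr\}.
\]
The crucial point is that the index family over which the supremum is taken does not depend on $y$. For each fixed admissible pair $(f,g)$ the functions $\varphi(f),\varphi(g)$ lie in $C_p^\ast(Y)$ and therefore extend continuously to $\widetilde{\varphi(f)},\widetilde{\varphi(g)}$ on $\beta Y$; consequently the single map $y\mapsto|\widetilde{\varphi(f)}(y)-\widetilde{\varphi(g)}(y)|$ is continuous on $\beta Y$.

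Finally, $a(\cdot,\beta X)$ is a pointwise supremum of a family of continuous real-valued functions on $\beta Y$, hence lower semicontinuous. Therefore the sublevel set $Y_n=\{y: a(y,\beta X)\le n\}$ is closed in $\beta Y$, and being a closed subset of the compact space $\beta Y$ it is compact. The argument is almost entirely formal; the only step that requires genuine care is getting the direction of the monotonicity of $a(y,\cdot)$ right (larger compact sets yield smaller values), since it is precisely this that singles out $\beta X$ as the decisive test set and justifies the reduction to a single lower semicontinuous function.
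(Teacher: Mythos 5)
Your proof is correct and is essentially the paper's argument in different clothing: the paper also reduces to the observation that $y\in Y_n$ iff $\beta X\in\mathscr{A}_n(y)$ and then, for $y\notin Y_n$, uses a single witnessing pair $(f,g)$ to produce the open set $\{z:|\widetilde{\varphi(f)}(z)-\widetilde{\varphi(g)}(z)|>n\}$ missing $Y_n$ --- which is exactly your lower semicontinuity of $y\mapsto a(y,\beta X)$ stated pointwise. Your explicit verification of the monotonicity of $a(y,\cdot)$ (which the paper leaves implicit) is a welcome addition, and the rest matches.
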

\begin{proof}
Pick $y\in \beta Y\setminus Y_n$. Since $\beta X\notin \mathscr{A}_n(y)$, there are functions $f,g\in C_p^\ast(X)$ satisfying $|\widetilde{f}(x)-\widetilde{g}(x)|<1$ for every $x\in \beta X$, and $|\widetilde{\varphi(f)}(y)-\widetilde{\varphi(g)}(y)|>n$. The set
$$U=\{z\in \beta Y:|\widetilde{\varphi(f)}(z)-\widetilde{\varphi(g)}(z)|>n\}$$
is an open neighborhood of $y$ in $\beta Y$. Moreover if $z\in U$, then $f$ and $g$ witness that $\beta X\notin \mathscr{A}_n(z)$; thus $U\cap Y_n=\emptyset$.
\end{proof}

For a space $X$ and a positive integer $m$, we denote by $[X]^{\leq m}$ the space of all nonempty at most $m$-element subsets of $X$ endowed with the Vietoris topology, i.e. basic open sets in $[X]^{\leq m}$ are of the form
$$\la U_1,\ldots ,U_k\ra=\{F\in [X]^{\leq m}:\forall i\leq k\quad F\cap U_i\neq \emptyset\; \text{and}\;F\subseteq \bigcup_{i=1}^k U_i\},$$
where $\{U_1,\ldots U_k\}$ is a finite collection of open subset of $X$.

For any positive integers $n,m$ we define
$$Y_{n,m}=\{y\in \beta Y:\mathscr{A}_n(y)
\cap[\beta X]^{\leq m}\neq\emptyset\}$$
Note that $Y_{n,m}\subseteq Y_n$ and by Proposition \ref{A_n is nonempty} we have
\begin{equation}
Y\subseteq \bigcup_{n,m}Y_{n,m}   \label{equation(2)}
\end{equation}
We claim that $Y_{n,m}$ is closed in $Y_n$ and hence it is compact:
\begin{lemma}\label{lemma_Y_nm}
The set $Y_{n,m}$ is closed in $Y_n$, hence it is compact.
\end{lemma}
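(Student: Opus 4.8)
The plan is to prove that $Y_{n,m}$ is closed by showing it contains all of its limit points, exploiting the compactness of the hyperspace $[\beta X]^{\leq m}$. First I would record that, since $\beta X$ is compact, the space $[\beta X]^{\leq m}$ is compact as well: it is the image of the compact space $(\beta X)^m$ under the continuous surjection $(x_1,\ldots,x_m)\mapsto \{x_1,\ldots,x_m\}$. I will use this to extract, from a net of witnessing finite sets, a convergent subnet whose limit witnesses membership in $Y_{n,m}$.

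So let $y\in \beta Y$ lie in the closure of $Y_{n,m}$ and fix a net $(y_\alpha)$ in $Y_{n,m}$ with $y_\alpha\to y$. For each $\alpha$ I choose $K_\alpha\in \mathscr{A}_n(y_\alpha)\cap [\beta X]^{\leq m}$, so that $a(y_\alpha,K_\alpha)\leq n$. By compactness of $[\beta X]^{\leq m}$ I may pass to a subnet and assume $K_\alpha\to K$ for some $K\in [\beta X]^{\leq m}$. It then suffices to show $a(y,K)\leq n$, since this yields $K\in \mathscr{A}_n(y)\cap [\beta X]^{\leq m}$ and hence $y\in Y_{n,m}$.

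To estimate $a(y,K)$, I fix any $f,g\in C_p^\ast(X)$ with $|\widetilde{f}(x)-\widetilde{g}(x)|<1$ for every $x\in K$. The set $W=\{x\in \beta X: |\widetilde{f}(x)-\widetilde{g}(x)|<1\}$ is open in $\beta X$ and contains $K$, so $K\in \la W\ra$, a basic Vietoris-open set. Since $K_\alpha\to K$, eventually $K_\alpha\subseteq W$, i.e. $|\widetilde{f}(x)-\widetilde{g}(x)|<1$ for every $x\in K_\alpha$. By the definition of $a(y_\alpha,K_\alpha)$ this gives $|\widetilde{\varphi(f)}(y_\alpha)-\widetilde{\varphi(g)}(y_\alpha)|\leq a(y_\alpha,K_\alpha)\leq n$ for all large $\alpha$. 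Passing to the limit along the net and using continuity of $\widetilde{\varphi(f)}$ and $\widetilde{\varphi(g)}$ on $\beta Y$, I obtain $|\widetilde{\varphi(f)}(y)-\widetilde{\varphi(g)}(y)|\leq n$. As $f,g$ were arbitrary subject to the constraint on $K$, taking the supremum gives $a(y,K)\leq n$, as needed.

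The main technical point is the interplay of the two inequalities in the definition of $a$: the strict bound $<1$ on $K$ must be transferred to the approximating sets $K_\alpha$, which is exactly where the open neighborhood $W$ and the Vietoris convergence $K_\alpha\to K$ enter, while the non-strict bound $\leq n$ is what survives the passage to the limit. Finally, since $Y_{n,m}\subseteq Y_n$ and $Y_n$ is compact by Lemma \ref{Y_n is compact}, the closedness of $Y_{n,m}$ in $\beta Y$ (a fortiori in $Y_n$) yields that $Y_{n,m}$ is compact.
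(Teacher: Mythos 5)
Your proof is correct and rests on the same two ingredients as the paper's: compactness of $[\beta X]^{\leq m}$ and the observation that a violation of the bound $a(\cdot,\cdot)\leq n$ is witnessed by an open condition (your $W$ and the Vietoris neighborhood $\la W\ra$ play exactly the role of the sets $U$ and $\la U\ra$ in the paper). The only difference is presentational: the paper phrases the argument as ``the set $Z=\{(y,F):F\in\mathscr{A}_n(y)\}$ is closed in the compact product $Y_n\times[\beta X]^{\leq m}$, and $Y_{n,m}$ is its projection,'' whereas you unwind the same fact pointwise via nets and convergent subnets of witnessing sets.
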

\begin{proof}
Consider the following subset $Z$ of the product $Y_n\times [\beta X]^{\leq m}$
$$Z=\{(y,F)\in Y_n\times [\beta X]^{\leq m}:F\in \mathscr{A}_n(y)\}.$$
We show that $Z$ is closed. Pick $(y,F)\in (Y_n\times [\beta X]^{\leq m})\setminus Z$. Then $F\notin \mathscr{A}_n(y)$ and thus there are $f,g\in C_p^\ast(X)$ satisfying $|\widetilde{f}(x)-\widetilde{g}(x)|<1$ for every $x\in F$, and $|\widetilde{\varphi(f)}(y)-\widetilde{\varphi(g)}(y)|>n$.
Let $U=\{x\in \beta X:|\widetilde{f}(x)-\widetilde{g}(x)|<1\}$ and
$V=\{z\in Y_n:|\widetilde{\varphi(f)}(z)-\widetilde{\varphi(g)}(z)|>n\}$. The set $V\times \la U \ra$ is an open neighborhood of $(y,F)$ in $Y_n\times [\beta X]^{\leq m}$ disjoint from $Z$.

The set $Z$, being is closed in the compact space $Y_n\times [\beta X]^{\leq m}$, is compact.
Since the set $Y_{n,m}$ is the image of $Z$ under the projection map it must be compact.
\end{proof}

\begin{cor}\label{betaY=sum of Y_nm}
Suppose that $Y$ is pseudocompact and let $\varphi:C_p^\ast(X)\to C_p^\ast(Y)$ be a uniformly continuous surjection. For every $y\in \beta Y$, there exist $n$ and $m$ such that $y\in Y_{n,m}$.
\end{cor}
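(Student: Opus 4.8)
The plan is to prove the contrapositive reformulation: I would show that the set $\beta Y \setminus \bigcup_{n,m} Y_{n,m}$ is empty, which is exactly the assertion that every $y \in \beta Y$ lies in some $Y_{n,m}$. The governing idea is to recognize this complement as a $G_\delta$-subset of $\beta Y$ that misses $Y$, and then to eliminate it using pseudocompactness via Hewitt's theorem.

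First I would note that, since $n$ and $m$ range over the positive integers, the union $\bigcup_{n,m} Y_{n,m}$ is a countable union. By Lemma \ref{Y_n is compact} and Lemma \ref{lemma_Y_nm}, each $Y_{n,m}$ is compact, hence closed in the Hausdorff space $\beta Y$, so each $\beta Y \setminus Y_{n,m}$ is open. Consequently the complement
$$G = \beta Y \setminus \bigcup_{n,m} Y_{n,m} = \bigcap_{n,m}(\beta Y \setminus Y_{n,m})$$
is a countable intersection of open subsets of $\beta Y$, i.e. a $G_\delta$-set.

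Next I would invoke the containment \eqref{equation(2)}, namely $Y \subseteq \bigcup_{n,m} Y_{n,m}$, which follows from Proposition \ref{A_n is nonempty} applied at every point of $Y$. This containment says precisely that $G \cap Y = \emptyset$; that is, the $G_\delta$-set $G$ is disjoint from $Y$. Now I would apply Hewitt's characterization (Theorem \ref{theorem_Hewitt}): since $Y$ is pseudocompact, every nonempty $G_\delta$-subset of $\beta Y$ meets $Y$. As $G$ is a $G_\delta$-set missing $Y$, it must be empty, whence $\bigcup_{n,m} Y_{n,m} = \beta Y$ and each $y \in \beta Y$ lies in some $Y_{n,m}$.

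I do not expect a genuine obstacle here, as the argument is a direct synthesis of the compactness of the $Y_{n,m}$, the countability of the index set, and Hewitt's theorem. The single point deserving attention is verifying that the indexing family $\{(n,m) : n,m \in \NN\}$ is countable, so that $G$ is truly a $G_\delta$-set rather than an intersection over a larger family; it is exactly this countability that makes ordinary pseudocompactness (the $\kappa = \omega$ instance of Retta's Theorem \ref{Retta}) the precisely matching hypothesis.
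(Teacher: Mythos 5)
Your argument is correct and is essentially identical to the paper's proof: the paper likewise observes that $\bigcup_{n,m}Y_{n,m}$ is an $F_\sigma$-set containing $Y$ (by Lemma \ref{lemma_Y_nm} and \eqref{equation(2)}) and concludes from Hewitt's Theorem \ref{theorem_Hewitt} that its $G_\delta$ complement must be empty. No issues.
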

\begin{proof}
By Lemma \ref{lemma_Y_nm}, the set $\bigcup_{n,m} Y_{n,m}$ is $F_\sigma$ in $\beta Y$ and contains $Y$, by \eqref{equation(2)}. It follows from Theorem \ref{theorem_Hewitt} that $\bigcup_{n,m}Y_{n,m}=\beta Y$.
\end{proof}

For $y\in \bigcup_{n,m} Y_{n,m}$ we define

$$K(y)=\bigcap \mathscr{A}(y).$$

\begin{xrem}
For $y\in Y$ the set $K(y)$ is the support introduced by Gul'ko in \cite{G} (see also \cite{MP}, \cite{A}, \cite{GKM}).
%Some of the arguments below follow closely 
\end{xrem}

\begin{lemma}\label{Lemma K(y)}
For every $y\in \bigcup_{n,m} Y_{n,m}$ the set $K(y)$ is a nonempty finite subset of $\beta X$. Moreover, $K(y)\in \mathscr{A}(y)$. If $y\in Y$, then $K(y)$ is a
subset of $X$.
\end{lemma}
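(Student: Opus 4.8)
The plan is to reduce everything to one structural fact about the family $\mathscr{A}(y)$: it is closed under finite intersections, with the quantitative bound $a(y,K_1\cap K_2)\le a(y,K_1)+a(y,K_2)$ for compact $K_1,K_2\subseteq\beta X$. To prove this I would fix $f,g\in C_p^\ast(X)$ with $|\widetilde f(x)-\widetilde g(x)|<1$ for all $x\in K_1\cap K_2$ and interpolate between them. Since $K_1\cap K_2$ is compact, the open set $U=\{x\in\beta X:|\widetilde f(x)-\widetilde g(x)|<1\}$ contains it, so $K_1\setminus U$ and $K_2\setminus U$ are disjoint compacta; by Urysohn's lemma pick a continuous $\lambda:\beta X\to[0,1]$ equal to $1$ on $K_1\setminus U$ and to $0$ on $K_2\setminus U$, and set $\widetilde h=\lambda\widetilde f+(1-\lambda)\widetilde g$. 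A direct check shows $|\widetilde h-\widetilde f|<1$ on all of $K_1$ and $|\widetilde h-\widetilde g|<1$ on all of $K_2$: on the parts lying in $U$ because $\lambda,1-\lambda\in[0,1]$ and $|\widetilde f-\widetilde g|<1$ there, and on the parts outside $U$ because $\widetilde h$ coincides with $\widetilde f$, respectively $\widetilde g$. Writing $h=\widetilde h|_{X}\in C_p^\ast(X)$, the definition of $a$ gives $|\widetilde{\varphi(f)}(y)-\widetilde{\varphi(h)}(y)|\le a(y,K_1)$ and $|\widetilde{\varphi(h)}(y)-\widetilde{\varphi(g)}(y)|\le a(y,K_2)$, and the triangle inequality together with the supremum over $f,g$ finishes the sub-lemma.

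Granting this, finiteness of $K(y)$ is immediate. Since $y\in\bigcup_{n,m}Y_{n,m}$, there are $n,m$ and a set $F_0\in\mathscr{A}_n(y)\cap[\beta X]^{\le m}$; hence $F_0\in\mathscr{A}(y)$ is finite and $K(y)=\bigcap\mathscr{A}(y)\subseteq F_0$ has at most $m$ points.

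To see that $K(y)\in\mathscr{A}(y)$ (which also yields $K(y)\neq\emptyset$), I would shrink $F_0$ down to $K(y)$ through finitely many intersections. For each $x\in F_0\setminus K(y)$, the equality $K(y)=\bigcap\mathscr{A}(y)$ supplies a set $K_x\in\mathscr{A}(y)$ with $x\notin K_x$. Because $F_0$ is finite, $K^\ast=F_0\cap\bigcap_{x\in F_0\setminus K(y)}K_x$ is a finite intersection of members of $\mathscr{A}(y)$, hence belongs to $\mathscr{A}(y)$ by the sub-lemma. One checks directly that $K(y)\subseteq K^\ast$ (as $K(y)$ is contained in each factor) and $K^\ast\subseteq K(y)$ (a point of $K^\ast\setminus K(y)$ would lie in $F_0\setminus K(y)$ and be excluded by its own $K_x$), so $K^\ast=K(y)\in\mathscr{A}(y)$. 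Since $a(y,\emptyset)=\infty$, the empty set is not in $\mathscr{A}(y)$, whence $K(y)=K^\ast\neq\emptyset$. Finally, if $y\in Y$ then Proposition \ref{A_n is nonempty} provides a nonempty finite $F\subseteq X$ with $F\in\mathscr{A}(y)$, and then $K(y)\subseteq F\subseteq X$.

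The only genuine work is the intersection sub-lemma; the rest is bookkeeping with finite sets. The delicate point there is the interpolation, i.e.\ producing a single $h$ that is within $1$ of $f$ on $K_1$ and within $1$ of $g$ on $K_2$ simultaneously. This is exactly where compactness of $K_1\cap K_2$ (to obtain the ambient open set $U$) and normality of $\beta X$ (to obtain $\lambda$) enter, and where one must track the estimates on $U$ carefully so that the strict inequalities required by the definition of $a$ are preserved.
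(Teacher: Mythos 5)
Your proof is correct and follows essentially the same route as the paper: the same Urysohn-function interpolation to show that $\mathscr{A}(y)$ is closed under finite intersections with the bound $a(y,K_1\cap K_2)\le a(y,K_1)+a(y,K_2)$, followed by writing $K(y)$ as a finite intersection of members of $\mathscr{A}(y)$ inside a finite member $F_0$. The only (harmless) difference is that you obtain nonemptiness directly from $K(y)\in\mathscr{A}(y)$ together with $\emptyset\notin\mathscr{A}(y)$, whereas the paper first gives a separate finite-intersection-property/compactness argument; your ordering is slightly more economical and equally valid.
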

\begin{proof}
We show that the family $\mathscr{A}(y)$ is closed under finite intersections. Pick $K_1,K_2\in \mathscr{A}(y)$
and let $f,g\in C_p^\ast(X)$ be such that $|\widetilde{f}(x)-\widetilde{g}(x)|<1$ for every $x\in K_1\cap K_2$.
Let
$$U=\{x\in \beta X: |\widetilde{f}(x)-\widetilde{g}(x)|<1\}.$$
The set $U$ is open in $\beta X$ and $K_1\cap K_2\subseteq U$.

Since $K_1$ and $K_2\setminus U$ are disjoint closed subsets of the compact space $\beta X$, by Urysohn's lemma
there is a continuous function $u:\beta X\to [0,1]$ such that
\begin{equation*}
u(x)=
  \left\{\begin{aligned}
  &   1 &&\mbox{for }x\in K_1\\
&0 \quad &&\mbox{for }x\in K_2\setminus U
  \end{aligned}
 \right. 
\end{equation*}

Let

$$\widetilde{h}=u\cdot (\widetilde{f}-\widetilde{g})+\widetilde{g}$$

and let $h\in C^\ast_p(X)$ be the restriction of $\widetilde{h}$ to $X$. We have:
\begin{itemize}
 \item $\widetilde{h}(x)=\widetilde{f}(x)$ for $x\in K_1$,
 \item $\widetilde{h}(x)=\widetilde{g}(x)$ for $x\in K_2\setminus U$ and
 \item if $x\in U$, then $|\widetilde{h}(x)-\widetilde{g}(x)|=|u(x)|\cdot |\widetilde{f}(x)-\widetilde{g}(x)|<1$,
 by definition of $U$ and the fact that $u$ maps into $[0,1]$.
\end{itemize}
In particular, since $K_1\cap K_2\subseteq U$, we get
\begin{itemize}
 \item $|\widetilde{h}(x)-\widetilde{g}(x)|<1$ for $x\in K_2$.
\end{itemize}

Since $K_1\in \mathscr{A}(y)$ and $\widetilde{h}(x)=\widetilde{f}(x)$ for $x\in K_1$,
we get $|\widetilde{\varphi(f)}(y)-\widetilde{\varphi(h)}(y)|\leq a(y,K_1)<\infty$.
Similarly, since $|\widetilde{h}(x)-\widetilde{g}(x)|<1$ for $x\in K_2$ and $K_2\in \mathscr{A}(y)$,
we have $|\widetilde{\varphi(g)}(y)-\widetilde{\varphi(h)}(y)|\leq a(y,K_2)<\infty$. Hence,
$$|\widetilde{\varphi(f)}(y)-\widetilde{\varphi(g)}(y)|\leq |\widetilde{\varphi(f)}(y)-\widetilde{\varphi(h)}(y)|
+ |\widetilde{\varphi(h)}(y)-\widetilde{\varphi(g)}(y)|\leq a(y,K_1)+a(y,K_2).$$
So
$a(y,K_1\cap K_2)\leq a(y,K_1)+a(y,K_2)$ and thus
$K_1\cap K_2\in \mathscr{A}(y)$. By induction the result follows for any finite intersection.

The family $\mathscr{A}(y)$, consisting of closed subsets of $\beta X$, is closed under finite intersections and $\emptyset \notin \mathscr{A}(y)$
so by compactness the intersection $\bigcap \mathscr{A}(y)$ must be nonempty.
It is finite because $y\in Y_{n,m}$ guarantees that the family $\mathscr{A}(y)$ contains a subset of $\beta X$ which is at most $m$-element.

Since $\mathscr{A}(y)$ contains a finite subset $F$ of $\beta X$, the set $K(y)\subseteq F$ is an intersection of finitely many elements of
$\mathscr{A}(y)$ so the first part of the proof implies that $K(y)\in \mathscr{A}(y)$.

Finally, if $y\in Y$ then $y\in \bigcup_{n,m}Y_{n,m}$, by \eqref{equation(2)}. So $K(y)$ is well defined. The inclusion
$K(y)\subseteq X$ follows from Proposition \ref{A_n is nonempty}.
\end{proof}

For $y\in \bigcup_{n,m} Y_{n,m}$ we define
$$a(y)=a(y,K(y))$$
By the Lemma \ref{Lemma K(y)}, $K(y)\in \mathscr{A}(y)$ so $a(y)<\infty$.
For a subset $A$ of $\beta X$ we set
$$K^{-1}(A)=\left\{y\in \bigcup_{n,m} Y_{n,m}:K(y)\cap A\neq \emptyset\right\}.$$

Combining Corollary \ref{betaY=sum of Y_nm} and Lemma \ref{Lemma K(y)} we get

\begin{prop}\label{K(y) is finite}
Suppose that $\varphi:C^\ast_p(X)\to C^\ast_p(Y)$ is a uniformly continuous surjection. If $Y$ is pseudocompact then, for every $y\in \beta Y$,
the set $K(y)$ is a well-defined nonempty finite subset of $\beta X$ that belongs to the family $\mathscr{A}(y)$. Also, $a(y)$ is a well-defined number,
for every $y\in \beta Y$.
\end{prop}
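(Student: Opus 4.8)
The plan is to simply combine the two results cited in the lead-in sentence, since all the substantive work has already been carried out. The entire content of the proposition is the observation that the pseudocompactness hypothesis on $Y$ upgrades the domain $\bigcup_{n,m} Y_{n,m}$ appearing in Lemma \ref{Lemma K(y)} to all of $\beta Y$.

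First I would invoke Corollary \ref{betaY=sum of Y_nm}: since $Y$ is pseudocompact and $\varphi$ is a uniformly continuous surjection, every point $y\in\beta Y$ lies in some $Y_{n,m}$. In other words, $\beta Y=\bigcup_{n,m}Y_{n,m}$. This is the crucial step that uses the hypothesis, and behind it sits Hewitt's characterization (Theorem \ref{theorem_Hewitt}) together with the fact, established in Lemma \ref{lemma_Y_nm}, that $\bigcup_{n,m}Y_{n,m}$ is an $F_\sigma$-subset of $\beta Y$ containing $Y$.

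Next, for an arbitrary $y\in\beta Y$ I would apply Lemma \ref{Lemma K(y)}. Because $y\in\bigcup_{n,m}Y_{n,m}$ by the previous step, the definition $K(y)=\bigcap\mathscr{A}(y)$ makes sense and the lemma yields at once that $K(y)$ is a nonempty finite subset of $\beta X$ and that $K(y)\in\mathscr{A}(y)$. Finally, the membership $K(y)\in\mathscr{A}(y)$ means precisely that $a(y,K(y))<\infty$, so $a(y)=a(y,K(y))$ is a well-defined real number for every $y\in\beta Y$.

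There is no genuine obstacle here: the difficulty was absorbed entirely into Corollary \ref{betaY=sum of Y_nm} (which supplies the covering of $\beta Y$) and into Lemma \ref{Lemma K(y)} (which supplies finiteness, nonemptiness, and membership in $\mathscr{A}(y)$). The only point worth stating explicitly is that the restriction of the domain in the definitions of $K(y)$ and $a(y)$ to $\bigcup_{n,m}Y_{n,m}$ is no longer a restriction once $Y$ is assumed pseudocompact, since that union then exhausts $\beta Y$.
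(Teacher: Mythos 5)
Your proposal is correct and follows exactly the paper's intended argument: the paper gives no separate proof, presenting the proposition as obtained by ``combining Corollary \ref{betaY=sum of Y_nm} and Lemma \ref{Lemma K(y)}'', which is precisely your two-step derivation. The concluding observation that $K(y)\in\mathscr{A}(y)$ gives $a(y)=a(y,K(y))<\infty$ matches the remark the paper makes immediately after defining $a(y)$.
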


The proof of the next lemma is analogous to the proof of \cite[Lemma 1.3]{A}.

\begin{lemma}\label{Lemma-Arbit}
 Suppose that $U\subseteq \beta X$ is open and let $n$ be a positive integer. For every $y\in K^{-1}(U)\cap Y_n$ there exists an open neighborhood $V$ of $y$ in
 $\beta Y$ such that for every $z\in V\cap Y_n$ and every $A\in \mathscr{A}_n(z)$ we have $A\cap U\neq \emptyset$.
\end{lemma}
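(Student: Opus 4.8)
The plan is to exploit the defining essentiality property of the support $K(y)$: a point lying in $K(y)$ cannot be avoided by any member of $\mathscr{A}(y)$. First I would fix $y\in K^{-1}(U)\cap Y_n$. Since $y\in K^{-1}(U)\subseteq \bigcup_{n,m}Y_{n,m}$, Lemma \ref{Lemma K(y)} guarantees that $K(y)$ is a well-defined nonempty finite set with $K(y)\in\mathscr{A}(y)$, and by the definition of $K^{-1}(U)$ we have $K(y)\cap U\neq\emptyset$. I would choose a point $x_0\in K(y)\cap U$ and set $L:=\beta X\setminus U$, which is compact, being closed in $\beta X$. The first (and only delicate) claim is that $L\notin\mathscr{A}(y)$, i.e. $a(y,L)=\infty$. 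I would prove this by contradiction: if $L\in\mathscr{A}(y)$, then since $\mathscr{A}(y)$ is closed under finite intersections (Lemma \ref{Lemma K(y)}) we would have $L\cap K(y)\in\mathscr{A}(y)$, whence $K(y)=\bigcap\mathscr{A}(y)\subseteq L\cap K(y)\subseteq L$; but $x_0\in K(y)$ while $x_0\notin L$, a contradiction.

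Having established $a(y,L)=\infty$, I would use the definition of $a(y,L)$ to select witnesses: functions $f,g\in C_p^\ast(X)$ with $|\widetilde{f}(x)-\widetilde{g}(x)|<1$ for every $x\in L$ and $|\widetilde{\varphi(f)}(y)-\widetilde{\varphi(g)}(y)|>n$ (possible since the supremum is infinite). I would then define
$$V=\{z\in \beta Y:|\widetilde{\varphi(f)}(z)-\widetilde{\varphi(g)}(z)|>n\}.$$
Because $\widetilde{\varphi(f)}$ and $\widetilde{\varphi(g)}$ are continuous on $\beta Y$, the set $V$ is open, and $y\in V$ by the choice of $f,g$.

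Finally I would check that $V$ has the required property. Take any $z\in V\cap Y_n$ and any $A\in\mathscr{A}_n(z)$, and suppose for contradiction that $A\cap U=\emptyset$. Then $A\subseteq \beta X\setminus U=L$, so $|\widetilde{f}(x)-\widetilde{g}(x)|<1$ holds for every $x\in A$. Hence the pair $(f,g)$ is admissible in the supremum defining $a(z,A)$, which gives $|\widetilde{\varphi(f)}(z)-\widetilde{\varphi(g)}(z)|\leq a(z,A)\leq n$, the last inequality because $A\in\mathscr{A}_n(z)$. This contradicts $z\in V$. Therefore $A\cap U\neq\emptyset$, as required.

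The only genuinely delicate point is the essentiality claim $a(y,L)=\infty$ in the first step; everything afterwards is a direct continuity-plus-definition chase, and I expect no obstacle there. The crux is that membership of $x_0$ in the intersection $\bigcap\mathscr{A}(y)$, combined with the stability of $\mathscr{A}(y)$ under finite intersections, forces \emph{every} compact set avoiding $x_0$ out of $\mathscr{A}(y)$. This is precisely the lower-semicontinuity mechanism that propagates the condition ``$K(y)$ meets $U$'' at $y$ to the condition ``every $A\in\mathscr{A}_n(z)$ meets $U$'' at all nearby $z$, and it is the reason the single pair $(f,g)$ chosen at $y$ continues to do its job at every $z\in V$.
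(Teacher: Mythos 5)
Your proof is correct, and it follows the same basic strategy as the paper: observe that $\beta X\setminus U\notin\mathscr{A}(y)$ because $K(y)=\bigcap\mathscr{A}(y)$ would otherwise miss $x_0\in K(y)\cap U$, extract a witnessing pair of functions, and let $V$ be the open set where the images under $\varphi$ of that pair stay more than $n$ apart. The difference is that you run the argument with the original pair $(f,g)$, chosen so that $|\widetilde{\varphi(f)}(y)-\widetilde{\varphi(g)}(y)|>n$, whereas the paper demands the stronger bound $a(y)+n$ and then interpolates an auxiliary function $\widetilde{h}$ equal to $\widetilde{f}$ off $U$ and to $\widetilde{g}$ at $x_0$ (after shrinking $U$ so that $U\cap K(y)=\{x_0\}$), defining $V$ via the pair $(f,h)$. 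Your shortcut is legitimate: in the final verification one only needs a pair that differs by less than $1$ on $A\subseteq\beta X\setminus U$ and whose $\varphi$-images are more than $n$ apart at $z$, and $(f,g)$ already has both properties — exact agreement of the pair off $U$, which is what the $h$-construction buys, is never used beyond the inequality $|\widetilde{f}-\widetilde{g}|<1$. So your version dispenses with the interpolation step, the shrinking of $U$, and the estimates \eqref{(6)}--\eqref{(7)}, at no cost; the paper's extra machinery appears to be inherited from the linear-case template of \cite[Lemma 1.3]{A} rather than forced by the present setting. One cosmetic remark: your derivation of $K(y)\subseteq L$ via the intersection $L\cap K(y)$ is a slight detour — if $L\in\mathscr{A}(y)$ then $K(y)=\bigcap\mathscr{A}(y)\subseteq L$ immediately.
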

\begin{proof}
 Fix $x_0\in K(y)\cap U$ witnessing $y\in K^{-1}(U)$. Since $K(y)$ is finite, shrinking $U$ if necessary we can assume that $U\cap K(y)=\{x_0\}$. Note that
 $\beta X\setminus U\notin \mathscr{A}(y)$ for otherwise $K(y)$ would be a subset of $\beta X\setminus U$ and this is not the case because $x_0\in K(y)\cap U$.
 It follows that there are $f,g\in C^\ast_p(X)$ such that
 \begin{align}
  &|\widetilde{f}(x)-\widetilde{g}(x)|<1 \mbox{ for every } x\in \beta X\setminus U\mbox{ and}\label{(3)}\\
  &|\widetilde{\varphi(f)}(y)-\widetilde{\varphi(g)}(y)|>a(y)+n \label{(4)}
 \end{align}
 Let $\widetilde{h}\in C_p(\beta X)$ be a function satisfying
 \begin{equation}
  \widetilde{h}(x)=\widetilde{f}(x) \mbox{ for every } x\in \beta X\setminus U,\mbox{ and } \widetilde{h}(x_0)=\widetilde{g}(x_0), \label{(5)}
 \end{equation}
 and let $h\in C^\ast_p(X)$ be the restriction of $\widetilde{h}$.
 
 Note that by \eqref{(3)} and \eqref{(5)},
 $|\widetilde{h}(x)-\widetilde{g}(x)|<1$ for every $x\in K(y)$. Therefore,
 \begin{equation}
  |\widetilde{\varphi(h)}(y)-\widetilde{\varphi(g)}(y)|\leq a(y). \label{(6)}
 \end{equation}
 According to \eqref{(4)} and \eqref{(6)} we have
 \begin{equation}
  |\widetilde{\varphi(f)}(y)-\widetilde{\varphi(h)}(y)|  \geq  |\widetilde{\varphi(f)}(y)-\widetilde{\varphi(g)}(y)|-
  |\widetilde{\varphi(h)}(y)-\widetilde{\varphi(g)}(y)|>n. \label{(7)}
 \end{equation}
 
 Let
 $$V=\{z\in \beta Y:|\widetilde{\varphi(f)}(z)-\widetilde{\varphi(h)}(z)|>n\}.$$
 The set $V$ is open and $y\in V$, by \eqref{(7)}. We show that $V$ is as required.
 
 Take $z\in V\cap Y_n$ and let $A\in \mathscr{A}_n(z)$. If $A\cap U=\emptyset$ then
 $\widetilde{h}(x)=\widetilde{f}(x)$ for every $x\in A$, by \eqref{(5)}. So $|\widetilde{\varphi(f)}(z)-\widetilde{\varphi(h)}(z)|\leq n$,
 contradicting $z\in V$.
\end{proof}
\begin{prop}\label{Corollary-Arbit}
 Suppose that $Y$ is pseudocompact. If $U\subseteq \beta X$ is open, then the set $K^{-1}(U)$ is a $G_\delta$-subset of $\beta Y$. 
\end{prop}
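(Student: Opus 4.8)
The plan is to prove the statement by showing instead that the \emph{complement} of $K^{-1}(U)$ in $\beta Y$ is an $F_\sigma$-set, since a subset is $G_\delta$ precisely when its complement is $F_\sigma$. Throughout I write $C=\beta X\setminus U$, which is a compact subset of $\beta X$ because $U$ is open. Since $Y$ is pseudocompact, Corollary \ref{betaY=sum of Y_nm} gives $\beta Y=\bigcup_{n,m}Y_{n,m}$, so by Proposition \ref{K(y) is finite} the finite set $K(y)$ and the number $a(y)=a(y,K(y))$ are defined for every $y\in\beta Y$, and moreover $K(y)\in\mathscr{A}(y)$. Consequently $K^{-1}(U)=\{y\in\beta Y:K(y)\cap U\neq\emptyset\}$, and its complement is exactly $\{y\in\beta Y:K(y)\subseteq C\}$.

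The heart of the argument is the identity $\{y\in\beta Y:K(y)\subseteq C\}=\{y\in\beta Y:a(y,C)<\infty\}$; that is, the complement of $K^{-1}(U)$ consists precisely of those $y$ for which $C\in\mathscr{A}(y)$. For the inclusion ``$\subseteq$'' I would use that $a(y,\cdot)$ is antitone in its set argument: enlarging the test set only strengthens the constraint $|\widetilde f-\widetilde g|<1$ and hence can only decrease the supremum defining $a$. Thus $K(y)\subseteq C$ yields $a(y,C)\le a(y,K(y))=a(y)<\infty$, where the finiteness is guaranteed by $K(y)\in\mathscr{A}(y)$. For the reverse inclusion ``$\supseteq$'', if $a(y,C)<\infty$ then $C\in\mathscr{A}(y)$, and therefore $K(y)=\bigcap\mathscr{A}(y)\subseteq C$. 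This equivalence is where pseudocompactness of $Y$ is genuinely used, through the well-definedness of $K(y)$ and the membership $K(y)\in\mathscr{A}(y)$ supplied by Proposition \ref{K(y) is finite}.

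It then remains to write $\{y:a(y,C)<\infty\}=\bigcup_{n\in\NN}\{y\in\beta Y:a(y,C)\le n\}$ and to check that each level set is closed in $\beta Y$. This closedness is proved exactly as in Lemma \ref{Y_n is compact}, with $C$ playing the role of $\beta X$: if $a(y_0,C)>n$, I would pick $f,g\in C_p^\ast(X)$ with $|\widetilde f(x)-\widetilde g(x)|<1$ for all $x\in C$ and $|\widetilde{\varphi(f)}(y_0)-\widetilde{\varphi(g)}(y_0)|>n$; then the set $\{z\in\beta Y:|\widetilde{\varphi(f)}(z)-\widetilde{\varphi(g)}(z)|>n\}$ is an open neighborhood of $y_0$ on which the same pair $f,g$ witnesses $a(\cdot,C)>n$, so the level set has open complement. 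Hence the complement of $K^{-1}(U)$ is $F_\sigma$ and $K^{-1}(U)$ is $G_\delta$. I expect the only real obstacle to be spotting the equivalence of the second paragraph; the two inclusions and the closedness of the level sets are then routine, and it is worth noting that on this route the conclusion is reached without invoking the finer pointwise localization recorded in Lemma \ref{Lemma-Arbit}.
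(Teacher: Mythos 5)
Your argument is correct, and it takes a genuinely different---and shorter---route than the paper's. The paper proves the proposition by localization: Lemma \ref{Lemma-Arbit} attaches to each $y\in K^{-1}(U)\cap Y_n$ an open neighborhood $V_n^y$ on which every $A\in\mathscr{A}_n(z)$ must meet $U$, and $K^{-1}(U)$ is then exhibited directly as the $G_\delta$-set $\bigcap_{m}\bigcup_{n\geq m}V_n$, the two inclusions being checked by juggling the indices $n$, the sets $Y_n$ and the numbers $a(z)$. You instead dualize: writing $C=\beta X\setminus U$, which is compact, you observe that $\beta Y\setminus K^{-1}(U)=\{y:K(y)\subseteq C\}=\{y:a(y,C)<\infty\}$. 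The two inclusions are exactly as you say: antitonicity of $a(y,\cdot)$ together with $a(y,K(y))<\infty$ (Proposition \ref{K(y) is finite}) gives one direction, and $C\in\mathscr{A}(y)\Rightarrow K(y)=\bigcap\mathscr{A}(y)\subseteq C$ gives the other; in effect you are noting that $\mathscr{A}(y)$ consists precisely of the compact subsets of $\beta X$ containing $K(y)$. The level sets $\{y:a(y,C)\leq n\}$ are closed by the same two-function witness argument as in Lemma \ref{Y_n is compact}, with $C$ in place of $\beta X$, so the complement is $F_\sigma$ and $K^{-1}(U)$ is $G_\delta$. Both proofs consume the pseudocompactness of $Y$ only through Corollary \ref{betaY=sum of Y_nm} and Proposition \ref{K(y) is finite}, but your route makes Lemma \ref{Lemma-Arbit} unnecessary (it is not used elsewhere in the paper) at the cost of no additional machinery. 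The one point worth making explicit in a write-up is the antitonicity $K_1\subseteq K_2\Rightarrow a(y,K_2)\leq a(y,K_1)$, which holds because the supremum defining $a(y,K_2)$ ranges over a smaller collection of pairs $(f,g)$; this is the same monotonicity the paper tacitly uses when it remarks that $y\in Y_n$ if and only if $\beta X\in\mathscr{A}_n(y)$.
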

\begin{proof}
 By Proposition \ref{K(y) is finite}, for every $y\in \beta Y$, the set $K(y)$ is a nonempty finite subset of $\beta X$.
 For $n=1,2,\ldots$, let
 $$L_n=K^{-1}(U)\cap Y_n.$$
 For $y\in L_n$ let $V_n^y$ be an open neighborhood of $y$ in $\beta Y$ provided by Lemma \ref{Lemma-Arbit}, i.e.
 \begin{equation}
 \mbox{if } z\in V_n^y\cap Y_n \mbox{ and } A\in \mathscr{A}_n(z), \mbox{ then } A\cap U\neq \emptyset. \label{(8)}
 \end{equation}
 Let 
 $$V_n=\bigcup\{V_n^y:y\in L_n\}.$$
 
 We claim that
 $$K^{-1}(U)=\bigcap_{m=1}^\infty\bigcup_{n=m}^\infty V_n.$$
 Indeed, pick $y\in K^{-1}(U)$ and fix an arbitrary $m\geq 1$. Since $\beta Y=\bigcup_{n=1}^\infty Y_n$ (cf. Corollary \ref{betaY=sum of Y_nm}),
 there is $i$ such that $y\in Y_i$. Since $Y_n\subseteq Y_{n+1}$, we can assume that $i>m$.
 We have $y\in L_i$ whence $y\in V^y_i\subseteq V_i\subseteq \bigcup_{n=m}^\infty V_n$, because $i>m$.
 
 To prove the opposite inclusion, take
 $z\in\bigcap_{m=1}^\infty\bigcup_{n=m}^\infty V_n$. Again, there is $i$ such that $z\in Y_i$. Let $j$ be a positive integer satisfying $j>\max\{a(z),i\}$.
 By our assumption, $z\in\bigcup_{n=j}^\infty V_n$, so there is $k\geq j$ such that $z\in V_k$. Clearly, $z\in Y_k$ and since $k>a(z)$ we have
 \begin{equation}
  K(z)\in \mathscr{A}_k(z). \label{(9)}
 \end{equation}
 By definition of $V_k$, there is $y\in L_k$ such that $z\in V^y_k$. Now, from \eqref{(8)} and \eqref{(9)} we get
 $z\in K^{-1}(U)$.
\end{proof}

\begin{rem}
If $\varphi:C_p(X)\to C_p(Y)$ is a uniform homeomorphism, we may consider the inverse map $\varphi^{-1}:C_p(Y)\to C_p(X)$ and apply all
of the above results to $\varphi^{-1}$. In particular, if $X$ is pseudocompact, then for every $x\in \beta X$ we can define the set $K(x)\subseteq \beta Y$
and the real number $a(x)$
simply by
interchanging the roles of $X$ and $Y$ above.
\end{rem}

\begin{lemma}\label{lemma-back}
Suppose that both $X$ and $Y$ are pseudocompact spaces. Let $\varphi:C^\ast_p(X)\to C^\ast_p(Y)$ be a uniform homeomorphism. For any $x\in \beta X$ there
is $y\in K(x)$ such that $x\in K(y)$.
\end{lemma}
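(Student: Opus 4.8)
The plan is to argue by contradiction. Fix $x\in\beta X$ and suppose that $x\notin K(y)$ for every $y\in K(x)$. Since both $X$ and $Y$ are pseudocompact, Proposition \ref{K(y) is finite} (applied to $\varphi$) guarantees that $K(y)$ is a well-defined nonempty finite subset of $\beta X$ for each $y\in\beta Y$, while the Remark preceding the lemma lets us apply all the preceding results to $\varphi^{-1}$, so that $K(x)\subseteq\beta Y$ is a nonempty finite set and $a(x)=a(x,K(x))<\infty$. Writing $K(x)=\{y_1,\dots,y_k\}$, the set $S=\bigcup_{j=1}^k K(y_j)$ is a finite subset of $\beta X$ not containing $x$. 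As $\beta X$ is compact Hausdorff, I would choose an open $U\subseteq\beta X$ with $x\in U$ and $\overline U\cap S=\emptyset$, and use Urysohn's lemma to produce, for a parameter $c>0$ to be fixed later, a function $\widetilde d\colon\beta X\to[0,c]$ with $\widetilde d(x)=c$ and $\widetilde d\equiv 0$ on $\beta X\setminus U$. Put $f=\widetilde d|_X\in C^\ast_p(X)$ and $g=0$.

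Since $K(y_j)\subseteq S\subseteq\beta X\setminus U$, we have $\widetilde f=\widetilde g\equiv 0$ on $K(y_j)$, so in particular $|\widetilde f(x')-\widetilde g(x')|<1$ for every $x'\in K(y_j)$. Because $K(y_j)\in\mathscr A(y_j)$ with $a(y_j,K(y_j))=a(y_j)$, this yields $|\widetilde{\varphi(f)}(y_j)-\widetilde{\varphi(g)}(y_j)|\le a(y_j)$ for each $j$. The crucial feature is that this estimate does not depend on the parameter $c$: no matter how large $\widetilde d(x)=c$ is, the images of $f$ and $g$ stay within the fixed distance $a(y_j)$ at each $y_j\in K(x)$, precisely because $f$ and $g$ agree \emph{exactly} on each $K(y_j)$.

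Now I would transfer this information back through $\varphi^{-1}$ using the support $K(x)$ of $x$. Let $p=\varphi(f)$, $q=\varphi(g)$ and let $m$ be an integer exceeding $\max_{1\le j\le k}a(y_j)$, so that $|\widetilde p(y_j)-\widetilde q(y_j)|<m$ for every $y_j\in K(x)$. The main step is to bridge the gap between this bound $m$ and the threshold $1$ built into the definition of $a(x)$. For this I would repeat the telescoping argument from the proof of Proposition \ref{A_n is nonempty}: setting $p_i=q+\tfrac{i}{m}(p-q)$ for $i=0,\dots,m$, consecutive functions satisfy $|\widetilde{p_{i+1}}(y_j)-\widetilde{p_i}(y_j)|<1$ on $K(x)$, whence each step contributes at most $a(x)$ and summing gives $|\widetilde{\varphi^{-1}(p)}(x)-\widetilde{\varphi^{-1}(q)}(x)|\le m\cdot a(x)$. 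Since $\varphi^{-1}(p)=f$ and $\varphi^{-1}(q)=g$, the left-hand side equals $|\widetilde f(x)-\widetilde g(x)|=\widetilde d(x)=c$. Thus $c\le m\cdot a(x)$, a bound by a constant independent of $c$; choosing $c>m\cdot a(x)$ at the outset produces the desired contradiction.

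The one genuine obstacle is this last paragraph: the supports $K(y_j)$ control the images of $f$ and $g$ only up to the fixed constants $a(y_j)$, which may well exceed $1$, whereas the support $K(x)$ of $x$ can be exploited only once the relevant functions lie within distance $1$ on $K(x)$. Overcoming this mismatch is exactly what the scaling/telescoping device of Proposition \ref{A_n is nonempty} achieves, converting an ``$<m$'' hypothesis into an ``$\le m\cdot a(x)$'' conclusion; everything else is routine separation in the compact Hausdorff space $\beta X$ together with bookkeeping on the finite supports.
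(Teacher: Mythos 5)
Your proposal is correct and follows essentially the same route as the paper's proof: separate $x$ from the finite set $\bigcup\{K(y):y\in K(x)\}$, build functions agreeing on that set but differing by more than $M\cdot a(x)$ at $x$, bound the images at each $y\in K(x)$ by $a(y)$, and then use the telescoping/rescaling trick through $\varphi^{-1}$ on $K(x)$ to reach a contradiction. The only cosmetic difference is that you realize the pair of functions concretely via Urysohn's lemma with $g=0$, where the paper just posits two extensions agreeing on $F$.
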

\begin{proof}
 Let $x\in \beta X$. Applying Proposition \ref{K(y) is finite}, first to $\varphi^{-1}$ and then to $\varphi$, we infer that the set
 $F=\bigcup\{K(y):y\in K(x)\}\subseteq \beta X$
 is finite being a finite union of finite sets. Let $M$ be a positive integer such that
$$M>\max\{a(y):y\in K(x)\}.$$
 
 Striving for a contradiction, suppose that $x\notin F$.
 Let $\widetilde{f},\widetilde{g}\in C_p(\beta X)$ be functions satisfying
 \begin{equation}
  \widetilde{f}(z)=\widetilde{g}(z) \mbox{ for every }z\in F \mbox{ and } |\widetilde{f}(x)-\widetilde{g}(x)|> M\cdot a(x). \label{(*)}
 \end{equation}
 Let $f\in C^\ast_p(X)$ and $g\in C^\ast_p(X)$ be the restrictions of $\widetilde{f}$ and $\widetilde{g}$, respectively.
Since for every $y\in K(x)$ the functions $\widetilde{f}$ and $\widetilde{g}$ agree on $K(y)\subseteq F$, we have
\begin{equation}
 |\widetilde{\varphi(f)}(y)-\widetilde{\varphi(g)}(y)|\leq a(y)<M,\mbox{ for every }y\in K(x). \label{(**)}
\end{equation}

For $k\in\{0,1,\ldots, M\}$ define a function $\widetilde{h_k}\in C_p(\beta Y)$ by the formula
$$\widetilde{h_k}=\widetilde{\varphi(f)}+\tfrac{k}{M}\left(\widetilde{\varphi(g)}-\widetilde{\varphi(f)}\right).$$
Obviously, $\widetilde{h_0}=\widetilde{\varphi(f)}$ and $\widetilde{h_M}=\widetilde{\varphi(g)}$.
Moreover, by \eqref{(**)}, we have
\begin{equation}
|\widetilde{h_{k+1}}(y)-\widetilde{h_{k}}(y)|=\tfrac{1}{M}|\widetilde{\varphi(g)}(y)-\widetilde{\varphi(f)}(y)|<1, \mbox{ for every } y\in K(x).
\label{12}
\end{equation}
For $k\in \{0,1,\ldots , M\}$ let $h_k\in C^\ast_p(Y)$ be the restriction of $\widetilde{h_k}$. Using \eqref{12} we get:
\begin{align*}
 |\widetilde{f}(x)-\widetilde{g}(x)|=|\widetilde{\varphi^{-1}(\varphi(f))}(x)-\widetilde{\varphi^{-1}(\varphi(g))}(x)|=
 |\widetilde{\varphi^{-1}(h_0)}(x)-\widetilde{\varphi^{-1}(h_M)}(x)|\leq \\
 |\widetilde{\varphi^{-1}(h_0)}(x)-\widetilde{\varphi^{-1}(h_1)}(x)|+\ldots + |\widetilde{\varphi^{-1}(h_{M-1})}(x)-\widetilde{\varphi^{-1}(h_M)}(x)|\leq M\cdot a(x)
\end{align*}
This however contradicts \eqref{(*)}.
\end{proof}

Now we are ready to prove of our main result.

\begin{proof}[Proof of Theorem \ref{theorem_main}]
Let $\kappa$ be an infinite cardinal and let $\varphi:C_p(X)\to C_p(Y)$ be a uniform homeomorphism. By symmetry it is enough to show that
if $Y$ is $\kappa$-pseudocompact then so is $X$. So let us assume that $Y$ is $\kappa$-pseudocompact. Then, in particular $Y$ is pseudocompact and hence by
Uspenski\u{\i}'s theorem \cite[Corollary]{U} (cf. \cite[V.136]{Tk}), so is $X$. Hence, $C_p(Y)=C^\ast_p(Y)$ and $C_p(X)=C^\ast_p(X)$. In order to prove that $X$ is $\kappa$-pseudocompact
we will employ Theorem \ref{Retta}. For this purpose fix a nonempty $G_\kappa$-subset $G$ of $\beta X$. It suffices to prove that $G\cap X\neq \emptyset$.

\begin{claim}\label{Claim 1}
The set $K^{-1}(G)=\{y\in \beta Y:K(y)\cap G\neq \emptyset\}$ is nonempty.
\end{claim}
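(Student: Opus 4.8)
The plan is to obtain Claim \ref{Claim 1} as an immediate consequence of Lemma \ref{lemma-back}; in fact the $G_\kappa$ hypothesis on $G$ plays no role here, and only the nonemptiness of $G$ matters (the $G_\kappa$ structure will be needed afterwards, to verify that $K^{-1}(G)$ is itself a $G_\kappa$-subset of $\beta Y$ via Proposition \ref{Corollary-Arbit}). The guiding idea is that Lemma \ref{lemma-back} is precisely a ``reciprocity'' statement: every point of $\beta X$ lies in the support $K(y)$ of some $y\in\beta Y$, so no point of $\beta X$ can be missed by the multivalued correspondence $y\mapsto K(y)$.

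First I would record that the hypotheses of Lemma \ref{lemma-back} are in force: in the proof of Theorem \ref{theorem_main} we have already arranged that both $X$ and $Y$ are pseudocompact and that $\varphi\colon C^\ast_p(X)\to C^\ast_p(Y)$ is a uniform homeomorphism. Then, since $G\neq\emptyset$, I would fix a point $x_0\in G\subseteq\beta X$ and apply Lemma \ref{lemma-back} to $x_0$, producing a point $y_0\in K(x_0)\subseteq\beta Y$ with $x_0\in K(y_0)$.

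It then only remains to unwind the definition of $K^{-1}(G)$. Because $x_0\in K(y_0)$ and $x_0\in G$, the finite set $K(y_0)\subseteq\beta X$ meets $G$, so $K(y_0)\cap G\neq\emptyset$, which is exactly the condition defining membership $y_0\in K^{-1}(G)$. Hence $K^{-1}(G)\neq\emptyset$, as claimed. I do not expect any genuine obstacle in this step, since all of the analytic work has already been absorbed into the telescoping estimate in the proof of Lemma \ref{lemma-back}; the only point deserving attention is that that lemma relies on the \emph{symmetric} hypotheses on $X$ and $Y$, which is why it is essential here that $\varphi$ be a uniform homeomorphism and not merely a uniformly continuous surjection — the reciprocity $x_0\in K(y_0)$ is obtained by running the earlier machinery for $\varphi^{-1}$ as well as for $\varphi$.
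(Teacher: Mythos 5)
Your proposal is correct and coincides with the paper's own argument: fix $x_0\in G$, apply Lemma \ref{lemma-back} to obtain $y_0\in K(x_0)$ with $x_0\in K(y_0)$, and conclude $y_0\in K^{-1}(G)$. Your surrounding remarks (that only nonemptiness of $G$ is used here, and that the pseudocompactness of both $X$ and $Y$ together with $\varphi$ being a uniform homeomorphism is what makes Lemma \ref{lemma-back} applicable) are accurate.
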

\begin{proof}
 The set $G$ is nonempty so let us fix $x\in G$. According to Lemma \ref{lemma-back} there is $y\in K(x)$ such that $x\in K(y)$. In particular, $y\in K^{-1}(G)$.
\end{proof}

\begin{claim}\label{Claim 2}
The set $K^{-1}(G)=\{y\in \beta Y:K(y)\cap G \neq \emptyset\}$ is a $G_\kappa$-set in $\beta Y$. 
\end{claim}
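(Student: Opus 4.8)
The plan is to reduce the claim to Proposition \ref{Corollary-Arbit}, which already handles a single open set, by exploiting the finiteness of the sets $K(y)$ established in Proposition \ref{K(y) is finite}. First I would write $G=\bigcap_{\alpha<\kappa}U_\alpha$ as an intersection of at most $\kappa$ open subsets of $\beta X$, and then replace this family by the family of all finite intersections of its members. Since $\kappa$ is infinite, the collection of finite subsets of a $\kappa$-sized index set again has cardinality at most $\kappa$, so after this replacement we still have $G=\bigcap_{\alpha<\kappa}U_\alpha$ with each $U_\alpha$ open in $\beta X$, and now the family $\{U_\alpha:\alpha<\kappa\}$ is closed under finite intersections.

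The heart of the argument is the identity
$$K^{-1}(G)=\bigcap_{\alpha<\kappa}K^{-1}(U_\alpha).$$
The inclusion $\subseteq$ is immediate, since $G\subseteq U_\alpha$ for each $\alpha$. For the reverse inclusion I would fix $y$ with $K(y)\cap U_\alpha\neq\emptyset$ for every $\alpha$ and argue by contradiction. If $K(y)\cap G=\emptyset$, then, listing the finitely many points $K(y)=\{x_1,\ldots,x_r\}$ (finiteness being guaranteed by Proposition \ref{K(y) is finite}), each $x_i$ lies outside some $U_{\alpha_i}$; choosing an index $\alpha^\ast$ with $U_{\alpha^\ast}\subseteq U_{\alpha_1}\cap\cdots\cap U_{\alpha_r}$ — possible because the family is closed under finite intersections — we obtain $K(y)\cap U_{\alpha^\ast}=\emptyset$, contradicting the assumption on $y$. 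This is precisely the step where finiteness of $K(y)$ is indispensable: without it, having a point of $K(y)$ in each $U_\alpha$ would not force a single point of $K(y)$ to lie in all of the $U_\alpha$ simultaneously, and hence in $G$.

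Finally, by Proposition \ref{Corollary-Arbit} each $K^{-1}(U_\alpha)$ is a $G_\delta$-subset of $\beta Y$, hence an intersection of countably many open sets. Therefore $K^{-1}(G)$ is an intersection of at most $\kappa\cdot\aleph_0=\kappa$ open subsets of $\beta Y$, that is, a $G_\kappa$-set, as required. The only genuine obstacle is the reverse inclusion in the displayed identity; the cardinal bookkeeping and the appeal to Proposition \ref{Corollary-Arbit} are routine.
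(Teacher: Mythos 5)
Your argument is correct and follows essentially the same route as the paper: close the family $\{U_\alpha\}$ under finite intersections, prove $K^{-1}(G)=\bigcap_{\alpha<\kappa}K^{-1}(U_\alpha)$ by the same finiteness-of-$K(y)$ contradiction, and invoke Proposition \ref{Corollary-Arbit} together with $\kappa\cdot\aleph_0=\kappa$. The only difference is that you spell out the cardinality bookkeeping that the paper leaves implicit.
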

\begin{proof}
Write $G=\bigcap\{U_\alpha:\alpha<\kappa\}$, where
each $U_\alpha$ is an open subset of $\beta X$. We can also assume that the family $\{U_\alpha:\alpha<\kappa\}$ is closed under finite intersections.
It follows from Proposition \ref{Corollary-Arbit} that for each $\alpha<\kappa$, the set $K^{-1}(U_\alpha)$ is $G_\delta$ in $\beta Y$.
Thus, it is enough to show that
$$K^{-1}(G)=\bigcap_{\alpha < \kappa}K^{-1}(U_\alpha).$$
To this end, take $y\in \bigcap_{\alpha < \kappa}K^{-1}(U_\alpha)$. According to Proposition \ref{K(y) is finite}, the set $K(y)$ is a nonempty
finite subset of $\beta X$. Enumerate $K(y)=\{x_1,\ldots,x_k\}$, where $k$ is a positive integer. If $y\notin K^{-1}(G)$, then
for every $i\leq k$ there is $\alpha_i<\kappa$ such that
\begin{equation}
x_i\notin U_{\alpha_i}. \label{13}
\end{equation}
The family $\{U_\alpha:\alpha<\kappa\}$ is closed under finite intersections, so
there is $\gamma<\kappa$ with $U_\gamma=U_{\alpha_1}\cap\ldots\cap U_{\alpha_k}$. But $y\in \bigcap_{\alpha < \kappa}K^{-1}(U_\alpha)\subseteq K^{-1}(U_\gamma)$.
Hence, there is $j\leq k$ such that $x_j\in U_\gamma\subseteq U_{\alpha_j}$, which is a contradiction with \eqref{13}. Therefore, we must have $y\in K^{-1}(G)$. This provides
the inclusion $K^{-1}(G)\supseteq\bigcap_{\alpha < \kappa}K^{-1}(U_\alpha)$. The opposite inclusion is immediate.
\end{proof}

It follows from Claims \ref{Claim 1} and \ref{Claim 2} that the $K^{-1}(G)$ is a nonempty $G_\kappa$-subset of $\beta Y$. Hence, by Theorem \ref{Retta},
there exists
$p\in K^{-1}(G)\cap Y$. We have $K(p)\cap G\neq \emptyset$ and since $p\in Y$, we infer from Lemma \ref{Lemma K(y)}
that $K(p)$ is a nonempty finite subset of $X$. Therefore, $\emptyset\neq K(p)\cap G\subseteq X\cap G$.
\end{proof}

\section*{Acknowledgements}
The author was partially supported by the NCN (National Science Centre, Poland) research Grant no. 2020/37/B/ST1/02613

\bibliographystyle{siam}
\bibliography{bib.bib}

\begin{thebibliography}{10}

\bibitem{pseudocompact}
{\sc J.~Angoa-Amador, A.~Contreras-Carreto, M.~Ibarra-Contreras, and
  A.~Tamariz-Mascar\'{u}a}, {\em Basic and classic results on pseudocompact
  spaces}, in Pseudocompact topological spaces, vol.~55 of Dev. Math.,
  Springer, Cham, 2018, pp.~1--38.

\bibitem{A}
{\sc A.~V. Arbit}, {\em The {L}indel\"{o}f number greater than continuum is
  {$u$}-invariant}, Serdica Math. J., 37 (2011), pp.~143--162.

\bibitem{Ar}
{\sc A.~V. Arhangel'skii}, {\em Strongly {$\tau$}-pseudocompact spaces},
  Topology Appl., 89 (1998), pp.~285--298.

\bibitem{GKM}
{\sc R.~G\'{o}rak, M.~Krupski, and W.~Marciszewski}, {\em On uniformly
  continuous maps between function spaces}, Fund. Math., 246 (2019),
  pp.~257--274.

\bibitem{G}
{\sc S.~P. Gul'ko}, {\em On uniform homeomorphisms of spaces of continuous
  functions}, Trudy Mat. Inst. Steklov., 193 (1992), pp.~82--88.

\bibitem{H}
{\sc E.~Hewitt}, {\em Rings of real-valued continuous functions. {I}}, Trans.
  Amer. Math. Soc., 64 (1948), pp.~45--99.

\bibitem{Ke}
{\sc J.~F. Kennison}, {\em {$m$}-pseudocompactness}, Trans. Amer. Math. Soc.,
  104 (1962), pp.~436--442.

\bibitem{Kr}
{\sc M.~Krupski}, {\em Linear homeomorphisms of function spaces and the
  position of a space in its compactification}.
\newblock arXiv:2208.05547 [math.GN], preprint 2022.

\bibitem{MP}
{\sc W.~Marciszewski and J.~Pelant}, {\em Absolute {B}orel sets and function
  spaces}, Trans. Amer. Math. Soc., 349 (1997), pp.~3585--3596.

\bibitem{Re}
{\sc T.~Retta}, {\em Some cardinal generalizations of pseudocompactness},
  Czechoslovak Math. J., 43(118) (1993), pp.~385--390.

\bibitem{Tk}
{\sc V.~V. Tkachuk}, {\em A {$C_p$}-theory problem book. Functional
  equivalencies}, Problem Books in Mathematics, Springer, Cham, 2016.

\bibitem{U}
{\sc V.~V. Uspenski\u{\i}}, {\em A characterization of compactness in terms of
  the uniform structure in a space of functions}, Uspekhi Mat. Nauk, 37 (1982),
  pp.~183--184.

\end{thebibliography}

\end{document}